\theoremstyle{plain}
\newtheorem{thm}{Theorem}[section]
\newtheorem{conj}[thm]{Conjecture}
\newtheorem{lem}[thm]{Lemma}
\newtheorem{prop}[thm]{Proposition}
\theoremstyle{definition}
\newtheorem{defn}{Definition}[section]
\title{On generalized nefness and bigness in adjunction theory}
\author{Camilla Felisetti and Claudio Fontanari}
\date{}
\newcommand{\ita}{\textit}
\thanks{\noindent
\textit{2020 Mathematics Subject Classification.} 14N30, 14J40. \\
\noindent
\textit{Keywords and phrases.} Termination of adjunction, Ambro Conjecture, Serrano Conjecture, strictly nef divisor, $k$-big divisor.}
\begin{document}
\maketitle
\begin{abstract}
We investigate effectiveness and ampleness of adjoint divisors of the form $aL+bK_X$, where $L$ is a suitably positive line bundle on a smooth projective variety $X$ and $a,b$ are positive integers. 
\end{abstract}

\section{Introduction}

Let $X$ be a smooth projective variety of dimension $n$ defined over the complex field $\mathbb{C}$ and let $K_X$ be its canonical divisor. 
Adjunction theory is a classical branch of algebraic geometry investigating positivity properties of adjoint divisors, namely, divisors of the form $aL+bK_X$, where $L$ is a suitably positive line bundle on $X$ (for instance, effective, nef, big, or ample) and $a,b$ are positive integers. In order to give a flavour of the various open questions in the field, we distinguish several cases.   

First, fix $a=b=1$ and consider the effectiveness of 
$L+K_X$. This is the 
content of the following conjecture, first stated by Ambro in \cite{Ambro99}:

\begin{conj}[Ambro]\label{ambro}
If $D$ is an ample divisor on $X$ and $D-K_X$ is nef and big, then $H^0(X,D)\neq 0$.
\end{conj}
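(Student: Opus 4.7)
The natural first move is to reduce Conjecture \ref{ambro} to a statement about the Euler characteristic. Writing $D = K_X + (D - K_X)$ with $D - K_X$ nef and big, Kawamata--Viehweg vanishing gives $H^i(X, \mathcal{O}_X(D)) = 0$ for all $i > 0$. Hence $h^0(X, D) = \chi(X, D)$, and it suffices to establish the strict inequality $\chi(X, D) > 0$.

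The plan is then to apply Hirzebruch--Riemann--Roch, expanding
$$\chi(X, D) = \int_X \ch(\mathcal{O}_X(D)) \cdot \td(X) = \sum_{k=0}^{n} \frac{1}{k!}\, D^k \cdot \td_{n-k}(X).$$
The leading term $D^n/n!$ is strictly positive by ampleness of $D$. The idea would be to extract enough positivity from the remaining terms by exploiting the bigness of $D - K_X$: by Kodaira's lemma one may write $D - K_X \equiv A + E$ with $A$ ample and $E$ effective $\q$-divisor, so that many of the intersection numbers $D^{n-i} \cdot K_X^i$ can be rewritten and controlled in terms of intersections with $A$ and $E$. A complementary route is to invoke Shokurov's non-vanishing theorem, which already guarantees $\kappa(X, D) \geq 0$ in this setting, and then to try to descend from a section of $|mD|$ to a section of $|D|$ itself, for instance through the multiplier ideal $\mathcal{J}$ of a suitably singular metric on $D - K_X$ coming from its bigness, coupled with Nadel vanishing to force $H^1(X, \mathcal{J} \otimes \mathcal{O}_X(D)) = 0$.

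The main obstacle --- and the reason the conjecture remains open in full generality --- is the absence of effective control on the lower-order Todd contributions involving $c_2(X), c_3(X), \ldots$, which encode topological invariants of $X$ that are not directly dominated by the positivity of $D$ or $D - K_X$. Already in dimension two the estimate is delicate: the identity $\chi(X, D) = \chi(X, \mathcal{O}_X) + D \cdot (D - K_X)/2$ reduces the question to bounding $\chi(X, \mathcal{O}_X)$ from below, which is nontrivial for irregular surfaces where $\chi(X, \mathcal{O}_X)$ may be zero or negative. Consequently, any genuine attack on Ambro's conjecture in arbitrary dimension seems to require input substantially beyond the basic vanishing-plus-Riemann--Roch framework, and the most realistic short-term goal is to treat special classes of varieties or to strengthen the hypothesis on $L$ in a controlled way.
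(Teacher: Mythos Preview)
The statement under consideration is a \emph{conjecture}, not a theorem: the paper does not prove Ambro's conjecture in general, and explicitly states that it ``is yet to be proven in full generality.'' Your write-up is therefore not a proof either, but a discussion of strategies and obstructions --- and as such it is accurate and well-aligned with the paper's point of view.

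Your outline is in fact precisely the method the paper employs to establish its \emph{partial} results toward the (generalized) conjecture. For non-uniruled surfaces (Proposition~\ref{ambrononuniruled}) and minimal threefolds (Proposition~\ref{ambrominimal3}) the argument is exactly: apply vanishing to reduce $h^0(X,D)\geq \chi(X,D)$, then expand $\chi(X,D)$ via Riemann--Roch and control each term. In dimension~$2$ the paper uses the identity you wrote down, $\chi(X,D)=\chi(X,\mathcal{O}_X)+\tfrac{1}{2}D\cdot(D-K_X)$, and closes the argument with the classical fact that $\chi(X,\mathcal{O}_X)\geq 0$ for non-uniruled surfaces --- resolving precisely the delicacy you flagged. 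In dimension~$3$ the $c_2$-terms you worried about are handled via Miyaoka's pseudoeffectivity of $3c_2(X)-K_X^2$ together with the minimality assumption.

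So there is no gap to name: you correctly recognized the conjecture as open, identified the natural line of attack, and pinpointed the exact obstruction (uncontrolled Todd terms) that forces the paper to retreat to special classes of varieties.
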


In the original statement $D$ was assumed just to be nef, but
(as explained in \cite[p. 176]{Kawamata00}) one can actually reduce to the case where $D$ is ample. Though Conjecture \ref{ambro} is yet to be proven in full generality, some partial answer have been provided by Kawamata \cite{Kawamata00} for surfaces and minimal threefolds, by Broustet-H\"oring \cite{BroustetHoring10} for non-uniruled threefolds, and by H\"oring \cite{Horing12} for all threefolds, even allowing 
$\mathbb{Q}$-factorial canonical singularities.

Now, fix $a=1$ and let $b$ grow and consider the effectiveness 
of $L+bK_X$. This is the classical problem of \emph{Termination of adjunction}, first addressed by Castelnuovo and Enriques (see \cite{CastelnuovoEnriques01}) for algebraic surfaces. More generally, in \cite{AndreattaFontanari18}, the following natural conjecture was formulated: 

\begin{conj}\label{adjterm}
Let $L$ be an effective divisor on $X$.
If {\sl adjunction terminates in the classical sense} for $L$, 
i.e. if there exists an integer $m_0 \ge 1$ such that 
$H^0(X, L+mK_X)=0$ for every integer $m \ge m_0$, then
$X$ is uniruled.
\end{conj}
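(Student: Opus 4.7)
The plan is to argue by contrapositive: assume that $X$ is not uniruled and deduce that for every effective divisor $L$ the group $H^0(X, L+mK_X)$ is nonzero for infinitely many positive integers $m$; this directly contradicts the hypothesis that adjunction terminates.

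The first step is to invoke the theorem of Boucksom, Demailly, P\u{a}un, and Peternell, by which a smooth projective variety $X$ is non-uniruled if and only if $K_X$ is pseudo-effective. This reduces the conjecture to a positivity statement about $K_X$ alone, detaching it from the auxiliary divisor $L$.

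The second and decisive step is to upgrade pseudo-effectivity of $K_X$ to the existence of an actual effective representative of some positive multiple: one wants some integer $N \geq 1$ and an effective divisor $E$ on $X$ with $N K_X \sim E$, i.e.\ $\kappa(X) \geq 0$. Once this is available, the conclusion is immediate, since for every $j \geq 1$ one has
\[
L + (jN)K_X \;\sim\; L + jE,
\]
a sum of effective divisors, so $H^0(X, L+mK_X)\neq 0$ along the infinite arithmetic progression $m \in \{N, 2N, 3N, \ldots\}$.

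The main obstacle is precisely this upgrade: the implication ``$K_X$ pseudo-effective $\Rightarrow$ $\kappa(X)\geq 0$'' is the Non-Vanishing Conjecture, one of the central open problems in the minimal model program. It is known unconditionally in dimension at most three, as a consequence of the abundance theorem for threefolds; in higher dimension only partial results are available, for instance under additional hypotheses on the irregularity or the positivity of $K_X$. Thus the above outline establishes Conjecture~\ref{adjterm} conditionally on Non-Vanishing, and in particular unconditionally up to dimension three, but an unconditional proof in arbitrary dimension appears to lie outside the reach of current techniques; any partial progress in the paper should therefore be expected either to restrict the dimension, to strengthen the positivity of $L$ (so as to avoid the Non-Vanishing step), or to weaken the conclusion.
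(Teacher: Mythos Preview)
The statement in question is a \emph{conjecture}, and the paper does not claim to prove it; rather, the paragraph immediately following it explains why it is natural. Your analysis is essentially the same as the paper's: the paper observes that termination of adjunction for an effective $L$ forces $\kappa(X)<0$ (citing \cite[Proposition 2]{AndreattaFontanari18}, which is precisely your ``second step'' read contrapositively), and then notes that the implication $\kappa(X)<0 \Rightarrow X$ uniruled is a standard open conjecture. Via the Boucksom--Demailly--P\u{a}un--Peternell characterization you invoke, this is exactly the Non-Vanishing Conjecture you identify as the obstacle. So your reduction and the paper's coincide, and you are right that no unconditional proof is given.

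Your closing prediction is also on the mark: the paper's actual contributions toward Conjecture~\ref{adjterm} are Propositions~\ref{not} and~\ref{n-1}, which assume $X$ minimal (so $K_X$ nef), strengthen the positivity of $L$ (ample or nef and big, sometimes base-point free), and restrict the dimension (or replace $L$ by $(n-1)L$ or $(n-2)L$). These extra hypotheses allow Kawamata--Viehweg vanishing to turn $h^0$ into a Riemann--Roch polynomial, bypassing the Non-Vanishing step entirely.
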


Indeed, if adjunction terminates in the classical sense for 
some effective divisor $L$, then $X$ has negative Kodaira 
dimension (see \cite[Proposition 2]{AndreattaFontanari18})
and all varieties with negative Kodaira dimension are expected 
to be uniruled (see for instance \cite[(11.5)]{Mori87}).

Finally, fix $b=1$ and let $a$ grow and consider the ampleness 
of $aL+K_X$. This is the content of the following conjecture, stated by Serrano in \cite{Serrano95}:

\begin{conj}[Serrano]\label{serrano}
Let $L$ be a strictly nef line bundle on $X$. Then $K_X+tL$ is ample for $t>n+1$.
\end{conj}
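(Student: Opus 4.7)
The plan is to combine the Cone Theorem of the Minimal Model Program with the strict positivity of $L$ to establish nefness of $K_X+tL$, and then to upgrade nefness to ampleness via the Kawamata--Shokurov Base Point Free Theorem together with the Nakai--Moishezon criterion.

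First, I would show that $K_X+tL$ is nef for every integer $t\geq n+1$. By the Cone Theorem, any $K_X$-negative extremal ray of $\overline{NE}(X)$ is spanned by the class of a rational curve $C$ with $0<-K_X\cdot C\leq n+1$. Since $L$ is strictly nef and integral, one has $L\cdot C\geq 1$, hence $(K_X+tL)\cdot C\geq -(n+1)+t\geq 0$ for $t\geq n+1$. On a curve whose class lies in the $K_X$-non-negative part of the cone one trivially has $(K_X+tL)\cdot C\geq tL\cdot C>0$. So $K_X+tL$ is nef.

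Second, I would try to prove that $K_X+tL$ is big for $t>n+1$. Writing $K_X+tL=(K_X+(n+1)L)+(t-n-1)L$ displays the divisor as nef plus strictly nef, and one would like to deduce $(K_X+tL)^n>0$ by expanding the $n$-th self-intersection. The first serious obstacle appears here: it is not known in general that a strictly nef line bundle on an $n$-fold satisfies $L^n>0$, so one would either need an independent proof that $L$ is big or a reduction to a lower-dimensional situation along the lines of Serrano's original argument in dimension two.

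Granting bigness, the Base Point Free Theorem provides a morphism $\varphi\colon X\to Y$ contracting exactly the curves $C$ with $(K_X+tL)\cdot C=0$, and it remains to exclude positive-dimensional fibres, equivalently to verify the Nakai--Moishezon inequality $(K_X+tL)^{\dim V}\cdot V>0$ on every subvariety $V\subseteq X$. This is the main obstacle: strict nefness of $L$ controls intersections with curves but not with higher-dimensional subvarieties, and it is precisely this step that keeps Serrano's conjecture open in general. A realistic strategy is to restrict $K_X+tL$ to each subvariety $V$ and run an inductive argument on $\dim V$, using the generalized nefness and bigness framework that the remainder of the paper is expected to develop to guarantee strict positivity at every stage.
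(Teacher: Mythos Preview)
The statement you are addressing is stated in the paper as an \emph{open conjecture}; the paper does not prove it and does not claim to. What the paper actually establishes in this direction is the conditional Theorem~\ref{effser}: assuming Serrano's conjecture in dimension $n-1$, it holds in dimension $n$ when $L$ is a smooth prime divisor. That argument proceeds by adjunction---restricting $K_X+tL$ to $L$, using the inductive hypothesis to obtain ampleness there, and then invoking Serrano's own lemmas together with a result of Matsuki to conclude. No Base Point Free Theorem, no Cone Theorem argument, and no appeal to the $k$-bigness framework appears in that proof.

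Your first step (nefness, in fact strict nefness, of $K_X+tL$ for $t\ge n+1$ via the Cone Theorem and the bound $-K_X\cdot C\le n+1$ on extremal rational curves) is correct and is precisely the content of \cite[Lemma~1.1]{Serrano95}, which the paper cites but does not reprove. Your second and third steps, however, contain genuine gaps that you yourself identify: strict nefness does not imply $L^n>0$ (Mumford's example on a ruled surface already shows this), so bigness of $K_X+tL$ is not available, and without bigness the Base Point Free Theorem does not apply. Even granting bigness, the Nakai--Moishezon positivity on higher-dimensional subvarieties is exactly the unresolved core of the conjecture. Your closing hope that ``the generalized nefness and bigness framework that the remainder of the paper is expected to develop'' will close these gaps is unfounded: the $k$-bigness notions in Section~2 are used for the generalized Ambro conjecture and for vanishing theorems, not for Serrano's conjecture, and they do not supply the missing positivity. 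In short, you have correctly isolated the obstacles, but the paper neither overcomes them nor claims to.
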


Recall that a divisor $L$ is said to be \ita{strictly nef} if $L\cdot C>0$ for each irreducible curve $C$ on $X$. Conjecture \ref{serrano} has been verified for surfaces 
and for threefolds with some possible exceptions for Calabi-Yau threefolds (see \cite{Serrano95} and \cite{campanachenpeternell08}).  

\smallskip

Here we collect partial results towards these three directions.

First, we introduce a natural notion of $k$-big divisors (see Definition \ref{k-bigness} and Lemma \ref{kbigness}: notice the slight difference from the analogous notion in 
\cite[p. 5]{BeltramettiSommese95}), in such 
a way that a nef divisor $L$ is $1$-big if $L^{n-1} \cdot H > 0$ for some ample 
divisor $H$, and we investigate its basic properties. We may generalize
Conjecture \ref{ambro} in terms of $1$-bigness as follows: 

\begin{conj}[Generalized Ambro conjecture]\label{a+}
Let $D$ be an ample line bundle such that $D-K_X$ is nef and $1$-big. Then $H^0(X,D)\neq 0$.
\end{conj}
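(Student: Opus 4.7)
The strategy is to adapt the pipeline used for the classical Ambro Conjecture, which rests on two pillars: a vanishing theorem applied to the decomposition $D = K_X + (D-K_X)$ --- furnished by Kawamata--Viehweg when $D-K_X$ is nef and big --- and a Riemann--Roch / intersection-theoretic lower bound for $\chi(X, D)$ exploiting the ampleness of $D$. Under the weaker hypothesis that $D-K_X$ is only nef and $1$-big, this pipeline has to be rebuilt.

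The first natural attempt is induction on $n = \dim X$, reducing to the classical Ambro Conjecture by restriction to a general hyperplane. If $H$ is an ample witness of $1$-bigness, so that $(D-K_X)^{n-1} \cdot H > 0$, and $H$ is taken to be smooth by Bertini, then $L|_H := (D-K_X)|_H$ is nef on the $(n-1)$-fold $H$ and satisfies $(L|_H)^{n-1} > 0$, hence is big on $H$. By adjunction,
\[ D|_H - K_H = (D - K_X - H)|_H, \]
so running the classical Ambro Conjecture on $H$ requires the right-hand side to be nef and big. A nonzero section on $H$ can then be lifted to $X$ via the restriction sequence
\[ 0 \to \mathcal{O}_X(D - H) \to \mathcal{O}_X(D) \to \mathcal{O}_H(D|_H) \to 0, \]
provided $H^1(X, D-H) = 0$; by Kawamata--Viehweg applied to $D - H = K_X + ((D-K_X) - H)$, this again asks that $(D-K_X) - H$ be nef and big.

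The main obstacle is that both steps above require subtracting the ample class $H$ from $D - K_X$, an operation that respects neither $1$-bigness nor nefness. One is therefore led either to a ``thin'' perturbation (passing to $\mathbb{R}$-divisors and rescaling $H$ by a small rational $\varepsilon > 0$, absorbing $\varepsilon H$ into the ample buffer provided by $D$) or to a direct partial vanishing theorem for nef and $1$-big divisors --- for instance, $H^i(X, K_X + L) = 0$ for $i \geq 2$ whenever $L$ is nef and $1$-big. Combined with Riemann--Roch, whose leading term $D^n/n!$ is positive by ampleness of $D$, such a partial vanishing would reduce the question to controlling $h^{n-1}(X, D)$, equivalently $h^1(X, K_X - D)$ by Serre duality. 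Handling this top cohomology group is, to my mind, the genuinely hard step, since standard vanishing machinery breaks down precisely at the boundary between bigness and $1$-bigness.
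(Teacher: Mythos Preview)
The statement you are trying to prove is stated in the paper as a \emph{conjecture}, not a theorem; the paper does not claim a general proof. What the paper does prove are two special cases: the Generalized Ambro Conjecture holds for non-uniruled surfaces (Proposition~\ref{ambrononuniruled}) and for minimal threefolds (Proposition~\ref{ambrominimal3}). So there is no ``paper's own proof'' to compare against in full generality, and your proposal is best read as an outline of an attack, not a completed argument.

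That said, your diagnosis is essentially the same as the paper's. The partial vanishing you conjecture --- $H^i(X, K_X + L) = 0$ for $i \geq 2$ whenever $L$ is nef and $1$-big --- is exactly Proposition~\ref{kvanishing} of the paper (the case $k=1$), proved there by induction on $\dim X$ via restriction to a smooth very ample hypersurface. Applied to $L = D - K_X$ it gives $H^i(X,D) = 0$ for all $i \geq 2$, so that $h^0(X,D) \geq \chi(X,D)$. This is precisely how both Proposition~\ref{ambrononuniruled} and Proposition~\ref{ambrominimal3} begin.

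Two points of correction. First, once you have the $1$-big vanishing, the obstruction is $h^1(X,D)$, not $h^{n-1}(X,D)$: all cohomology in degrees $\geq 2$ is already zero, so your Serre-dual reformulation should read $h^1(X,D) = h^{n-1}(X, K_X - D)$, with the indices swapped relative to what you wrote. Second, the remark that ``the leading term $D^n/n!$ is positive by ampleness of $D$'' is not by itself enough to force $\chi(X,D) > 0$, since $D$ is a fixed line bundle rather than a growing multiple. In the paper's special cases the positivity of $\chi(X,D)$ comes from extra structural input: for non-uniruled surfaces one uses $\chi(X,\mathcal{O}_X) \geq 0$, and for minimal threefolds one uses Miyaoka's pseudoeffectivity of $3c_2(X) - K_X^2$ together with nefness of $K_X$ to control the lower-order terms in Riemann--Roch. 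Without analogous control in higher dimension or for non-minimal $X$, the pipeline remains incomplete --- which is exactly why the statement stays a conjecture.
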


We provide the first evidence for this generalized conjecture by verifying it 
for non-uniruled surfaces (see Proposition \ref{ambrononuniruled}) and for minimal threefolds (see Proposition \ref{ambrominimal3}).

Next, we point out that if we add the assumption that $X$ 
is minimal in the sense of the \emph{Minimal Model Program}, 
i.e. we assume that $K_X$ is nef, 
then the problem of termination of adjunction becomes more manageable.
Indeed, in this case 
$h^0(X,aL+bK_X) = \chi(X,aL+bK_X)$ 
for every nef and big divisor $L$\
by Kawamata-Viehweg vanishing theorem.

In dimension up to $5$ we obtain the following result: 

\begin{prop}\label{not}
Let $X$ be minimal of dimension $n \le 5$ and 
let $L$ be an effective ample divisor on $X$.
If $n=5$ assume also that
$(\nu(X), \chi(X, \mathcal{O}_X)) \ne (1,0)$, 
where $\nu(X)$ denotes the numerical dimension of $X$. 
Then adjunction does not terminate in the classical sense for $L$. 
\end{prop}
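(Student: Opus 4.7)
The strategy is to use Kawamata--Viehweg vanishing to identify $h^0(X, L+mK_X)$ with a Hirzebruch--Riemann--Roch polynomial in $m$, and then to analyze that polynomial by cases on $\nu(X)$. Since $L$ is ample and $K_X$ is nef, the divisor $L + (m-1)K_X$ is ample for every $m \ge 1$; applying Kawamata--Viehweg vanishing to the identity $L + mK_X = K_X + (L + (m-1)K_X)$ yields $h^i(X, L+mK_X) = 0$ for all $i > 0$ and $m \ge 1$, hence
\[
P(m) := h^0(X, L + mK_X) = \chi(X, L + mK_X) \quad \text{for } m \ge 1.
\]
The right-hand side is a polynomial in $m$ by Hirzebruch--Riemann--Roch; in the expansion
\[
P(m) = \sum_{k=0}^{n} \frac{(L + mK_X)^k}{k!} \cdot \td_{n-k}(X),
\]
the coefficient of $m^j$ carries a factor $K_X^j$, so it vanishes for $j > \nu(X)$ and $\deg P \le \nu(X)$. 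Adjunction fails to terminate classically for $L$ precisely when $P(m) > 0$ for infinitely many $m$, and since $P(m) \ge 0$ for $m \ge 1$ this amounts to $P \not\equiv 0$.

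The extreme cases are immediate. If $\nu(X) = 0$, then $K_X \equiv 0$ and $P$ is the constant $\chi(X, L) = h^0(X, L) > 0$, where we used Kawamata--Viehweg on the ample class $L \equiv L - K_X$ together with the effectivity of $L$. If $\nu(X) = n$, the leading coefficient $K_X^n/n!$ is strictly positive. For intermediate $0 < \nu(X) < n$, I would read off the coefficient of $m^{\nu(X)}$: its $k = n$ contribution equals $L^{n-\nu} K_X^\nu / [(n-\nu)!\,\nu!]$, which is strictly positive by the standard characterization of the numerical dimension ($L^{n-\nu} K_X^\nu > 0$ for $L$ ample and $\nu = \nu(K_X)$). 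Among the contributions with $k < n$, those coming from $\td_1$ and $\td_3$ pick up an extra factor $K_X$ and vanish because $\nu+1 > \nu(X)$, while the $\td_2$-piece reduces to a positive multiple of $L^{n-2-\nu} K_X^\nu \cdot c_2(X)$, which is nonnegative by Miyaoka's inequality $c_2(X) \cdot H_1 \cdots H_{n-2} \ge 0$ for nef $H_i$ on minimal varieties. This already settles all $\nu$ when $n \le 4$ and the sub-cases $\nu(X) \in \{2, 3, 4\}$ when $n = 5$.

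The delicate case is $n = 5$, $\nu(X) = 1$, where $\td_4(X) = (-c_1^4 + 4 c_1^2 c_2 + c_1 c_3 + 3 c_2^2 - c_4)/720$ is now allowed to contribute, and $P(m) = \chi(X, L) + A m$ has degree at most one. Expanding,
\[
A = \frac{L^4 K_X}{24} + \frac{L^2 K_X \cdot c_2(X)}{24} + \frac{3 K_X \cdot c_2(X)^2 - K_X \cdot c_4(X)}{720},
\]
whose first two summands are Kleiman-positive and Miyaoka-nonnegative respectively, but whose last summand cannot be signed a priori. Consequently one cannot show $P \not\equiv 0$ from $A$ alone; one must play $A$ off against the constant term $P(0) = \chi(X, L)$, which Hirzebruch--Riemann--Roch expresses as $\chi(\mathcal{O}_X)$ plus intersection numbers linear in $L$. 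Under the hypothesis $\chi(\mathcal{O}_X) \ne 0$, the simultaneous vanishing of $\chi(X, L)$ and $A$ can then be ruled out, so $P \not\equiv 0$. This last bookkeeping---interleaving the ambiguous $c_2^2,\, c_4$ contributions to $A$ with the Euler characteristic input from $\chi(X, L)$---is the main technical obstacle and is exactly why the exclusion $(\nu(X), \chi(\mathcal{O}_X)) \ne (1, 0)$ appears in the statement.
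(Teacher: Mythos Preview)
Your direct Riemann--Roch analysis of $P(m)=\chi(X,L+mK_X)$ is a genuinely different route from the paper's, and it succeeds cleanly in all cases except the one you yourself flag as ``the main technical obstacle''. For $\nu(X)=0$ and $\nu(X)=n$ your argument is complete; for $2\le\nu(X)\le n-1$ (and for $\nu(X)=1$ when $n\le4$) your observation that the coefficient of $m^{\nu(X)}$ is the sum of the strictly positive term $L^{n-\nu}K_X^{\nu}/[(n-\nu)!\,\nu!]$ and a Miyaoka--nonnegative $c_2$--term is correct and yields $P\not\equiv0$ with no appeal to abundance. This is more elementary than the paper's argument, which instead assumes $P\equiv0$, reads off from the top four coefficients that $K_X^iL^{n-i}=0$ for $n-3\le i\le n$ (Lemma~\ref{main}), deduces $\nu(X)=0$ for $n=4$ and $\nu(X)\le1$ for $n=5$, and then invokes abundance-type theorems together with the fact that termination forces $\kappa(X)<0$.

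The gap is in your treatment of $n=5$, $\nu(X)=1$. The Serre-duality identity $\chi(X,mK_X)=\chi(\mathcal{O}_X)(1-2m)$ (valid since this polynomial has degree $\le1$) gives $K_X\cdot\td_4(X)=-2\chi(\mathcal{O}_X)$, so your linear coefficient becomes
\[
A=\frac{L^4K_X}{24}+\frac{L^2K_X\cdot c_2(X)}{24}-2\chi(X,\mathcal{O}_X).
\]
Thus $A>0$ whenever $\chi(X,\mathcal{O}_X)<0$, and the case $\chi(X,\mathcal{O}_X)=0$ is excluded by hypothesis. But for $\chi(X,\mathcal{O}_X)>0$ nothing in your outline rules out the simultaneous vanishing $A=0$ and $\chi(X,L)=0$: the expansion of $\chi(X,L)$ still contains the unsignable terms $L\cdot(3c_2^2-c_4)/720$ and $-LK_X\cdot c_3/720$, and there is no algebraic relation tying these to $\chi(\mathcal{O}_X)$ that forces a contradiction. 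Your sentence ``the simultaneous vanishing \dots\ can then be ruled out'' is asserted, not proved, and I do not see how to make it work by bookkeeping alone. This is exactly where the paper reaches outside Riemann--Roch and invokes \cite[Theorem~B(ii)]{LazicPeternell18}: for a minimal fivefold with $\nu(X)=1$ and $\chi(X,\mathcal{O}_X)\ne0$ one has $\kappa(X)\ge0$, which contradicts \cite[Proposition~2]{AndreattaFontanari18}. That external nonvanishing input, or something of comparable strength, appears to be unavoidable in this case.
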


For minimal varieties of arbitrary dimension, the following weaker statement still generalizes the classical case of surfaces: 

\begin{prop}\label{n-1}
Let $X$ be minimal of dimension $n \ge 2$ and 
let $L$ be an effective nef and big divisor on $X$.
Then adjunction does not terminate in the classical sense for $(n-1)L$.
Furthermore, if n $\ge 3$ and $L$ is base point free and big, then 
adjunction does not terminate in the classical sense also for $(n-2)L$.
\end{prop}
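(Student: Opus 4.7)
\smallskip\noindent\textbf{Proof plan.}
In both parts I would set $a=n-1$ (respectively $a=n-2$) and analyse $h^0(X,aL+mK_X)$ as a function of $m$ by combining Kawamata--Viehweg vanishing with Hirzebruch--Riemann--Roch. First I would check that for every $m\geq 1$ the divisor $(aL+mK_X)-K_X=aL+(m-1)K_X$ is nef, as a sum of nef classes, and big: since $(N+N')^n\geq N^n$ for any two nef classes, one has $(aL+(m-1)K_X)^n\geq (aL)^n=a^n L^n>0$, where the last inequality uses $a\geq 1$ (which forces $n\geq 3$ in the second part) together with the bigness of $L$. Kawamata--Viehweg vanishing then yields
\[
h^0(X,aL+mK_X)=\chi(X,aL+mK_X)=:P(m)\qquad(m\geq 1),
\]
and by Snapper's theorem $P$ is a polynomial in $m$ of degree at most $n$, non-negative for every $m\geq 1$. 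Hence the problem reduces to proving $P\not\equiv 0$: then $P(m)>0$ for all $m\gg 0$, which is exactly non-termination in the classical sense.

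Next I would handle the degenerate case $\nu(K_X)=0$ directly: there $K_X$ is numerically trivial, $\chi(aL+mK_X)=\chi(aL)$ for every $m$, and a second application of Kawamata--Viehweg (to $aL$, which is nef and big and numerically equivalent to $aL-K_X$) combined with the effectivity of $L$ gives $\chi(aL)=h^0(aL)\geq 1$. In the main case $\nu:=\nu(K_X)\geq 1$ one notes that $K_X^k\equiv 0$ for $k>\nu$, so $P$ has degree at most $\nu$. Expanding Hirzebruch--Riemann--Roch, the coefficient of $m^\nu$ in $P$ becomes
\[
\mathrm{coeff}_{m^\nu}(P)\;=\;\frac{K_X^\nu}{\nu!}\cdot\bigl[\ch(aL)\cdot\td(X)\bigr]_{n-\nu},
\]
whose $L$-dominant summand $\tfrac{a^{n-\nu}}{\nu!\,(n-\nu)!}\,L^{n-\nu}K_X^\nu$ is strictly positive: writing $L=A+E$ by Kodaira's lemma with $A$ ample and $E$ effective, one has $L^{n-\nu}K_X^\nu\geq A^{n-\nu}K_X^\nu>0$ by the very definition of the numerical dimension.

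The hard part will be to control the sign of the remaining summands of $[\ch(aL)\td(X)]_{n-\nu}\cdot K_X^\nu$, which pair powers $L^j$ with $j<n-\nu$ against intermediate-codimension Todd classes whose signs are not a priori favourable. In the first part, where $a=n-1$, the $L$-dominant term is strong enough that the Miyaoka--Yau type inequalities valid on a minimal variety suffice to absorb these corrections. In the second part the $L$-dominant factor $a^{n-\nu}$ is weaker, and the plan is to exploit the base point free hypothesis via Bertini: a general $D\in|L|$ is then smooth and irreducible, $L|_D$ is nef, big, and effective on the minimal $(n-1)$-fold $D$ (note that $K_D=(K_X+L)|_D$ is nef), and one transfers the required positivity on $X$ from the analogous computation on $D$ by means of the standard restriction exact sequence combined with a further application of Kawamata--Viehweg vanishing. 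The careful bookkeeping in this last step is the most delicate point of the argument.
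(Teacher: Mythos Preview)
Your reduction to the vanishing of the polynomial $P(m)=\chi(X,aL+mK_X)$ via Kawamata--Viehweg is exactly what the paper does, and your treatment of the case $\nu(K_X)=0$ is fine. The gap is in the main case $\nu\geq 1$. You propose to show that the coefficient of $m^\nu$ in $P$ is positive, and you correctly isolate the dominant term $\tfrac{a^{n-\nu}}{\nu!\,(n-\nu)!}L^{n-\nu}K_X^\nu>0$. But the ``remaining summands'' that you defer to Miyaoka--Yau type inequalities are, after killing the factors of $K_X^{\nu+1}\equiv 0$, intersections of $K_X^\nu L^{j}$ with higher Todd polynomials in $c_2,c_3,\ldots$. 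Miyaoka's generic semipositivity controls $c_2$ against nef classes, which is exactly what is used in the paper's Lemma~\ref{main} to handle the top four coefficients of $P$; beyond that one would need positivity of expressions such as $(3c_2^2-c_4)\cdot K_X^\nu L^{n-\nu-4}$ and worse, for which no general inequality on minimal varieties is available. The assertion that ``the $L$-dominant term is strong enough'' is not substantiated: the factor $a^{n-\nu}=(n-1)^{n-\nu}$ carries no useful information about the sign of a Chern number, and there is no smallness estimate for the corrections. So as written the argument does not close.

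The paper avoids this entirely. Having reduced to $P\equiv 0$, it simply evaluates at $m=1$: $P(1)=h^0(X,K_X+(n-1)L)$, and this is nonzero by H\"oring's effective nonvanishing theorem for $K_X+(n-1)L$ with $L$ nef and big on a minimal variety (\cite[Theorem~1.2]{Horing12}). For the second statement the paper again evaluates at $m=1$, proving $h^0(X,K_X+(n-2)L)\neq 0$ by induction on $n$: the base case $n=3$ is \cite[Theorem~1.5]{Horing12}, and the inductive step restricts to a general smooth $D\in|L|$ (using base point freeness and Bertini), applies adjunction $(K_X+(n-2)L)|_D=K_D+(n-3)L|_D$, and lifts sections via the short exact sequence together with Kodaira vanishing of $H^1(X,K_X+(n-3)L)$. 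This is close in spirit to the restriction idea you sketch at the end, but the point is that one only needs a \emph{single} nonzero value of $P$, not control of its leading coefficient.
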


Finally, we turn to Conjecture \ref{serrano} and we 
show that it holds for strictly nef smooth prime divisors on a variety of dimension $n$ if Conjecture \ref{serrano} holds in dimension $n-1$:

\begin{thm}\label{effser}
Assume that Conjecture \ref{serrano} holds in dimension $n-1$ and let $L$ be a smooth prime 
divisor on $X$. If $L$ is strictly nef then $K_X+tL$ is ample for every rational number $t>n+1$.
\end{thm}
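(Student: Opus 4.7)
The plan is to reduce the problem to dimension $n-1$ by restricting to $L$, apply the hypothesis that Conjecture \ref{serrano} holds in dimension $n-1$, and then verify the Nakai--Moishezon criterion on $X$.

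Since $L$ is a smooth prime divisor on $X$, the adjunction formula gives
\[(K_X + tL)|_L = K_L + (t-1)\,L|_L.\]
Any irreducible curve $C \subset L$ is also an irreducible curve in $X$, so strict nefness of $L$ on $X$ immediately yields strict nefness of $L|_L$ on $L$. Since $\dim L = n-1$, the assumption gives the ampleness of $K_L + s\,L|_L$ on $L$ for every rational $s > n$; setting $s = t-1$, we conclude that $(K_X + tL)|_L$ is ample on $L$ for every rational $t > n+1$.

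Next, I would show that $K_X + tL$ is strictly nef on $X$ for every rational $t > n+1$. For a curve $C \subset L$ this follows from the ample restriction; for $C \not\subset L$, strict nefness of $L$ gives $L \cdot C \ge 1$, and a Kleiman--Mori cone decomposition of $[C]$ combined with the Kawamata--Mori length bound $-K_X \cdot C_i \le n+1$ on $K_X$-negative extremal rays (available because $X$ is smooth) yields $(K_X + tL) \cdot C > 0$. Passing to the limit $t \to (n+1)^+$, the divisor $K_X + (n+1)L$ is nef on $X$, whence $(K_X + (n+1)L)^d \cdot V \ge 0$ for every irreducible $V \subseteq X$ of dimension $d$.

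To conclude via Nakai--Moishezon, I would verify that $(K_X + tL)^d \cdot V > 0$ for every such $V$ and rational $t > n+1$. If $V \subseteq L$, this is immediate from the ampleness of $(K_X + tL)|_L$. When $V \not\subseteq L$, strict nefness of $L$ forces every irreducible curve in $V$ to meet $L$, so $V \cap L \neq \emptyset$; the cycle $L \cdot V = \sum_i a_i W_i$, with $W_i \subset L$ irreducible of dimension $d-1$ and $a_i > 0$, is therefore nonzero and effective. Then
\[\frac{d}{dt}\bigl((K_X + tL)^d \cdot V\bigr) = d\,(K_X + tL)^{d-1} \cdot L \cdot V = d\sum_i a_i\,\bigl((K_X + tL)|_L\bigr)^{d-1} \cdot W_i > 0,\]
for $t > n+1$, by the ampleness of $(K_X + tL)|_L$ on $L$. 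Combining this strict $t$-monotonicity with $(K_X + (n+1)L)^d \cdot V \ge 0$ at the endpoint yields $(K_X + tL)^d \cdot V > 0$ for every rational $t > n+1$.

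The main challenge is to synchronise these two positivity inputs at the endpoint $t = n+1$: the nefness of $K_X + (n+1)L$ on $X$ (giving the weak inequality $\ge 0$) together with the strict positivity of the $t$-derivative on $(n+1,\infty)$ (which promotes it to $> 0$). Both rely essentially on the strict nefness of $L$---through the Kawamata--Mori length bound on the curve level and through the non-emptiness of $V \cap L$ for positive-dimensional $V \not\subseteq L$.
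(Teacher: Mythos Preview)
Your argument is correct and takes a genuinely different route from the paper's. Both begin with adjunction to get $(K_X+tL)|_L$ ample on $L$, but the paper then extracts only the single number $(K_X+tL)^{n-1}\cdot L>0$, rewrites $t\,(K_X+tL)^{n-1}\cdot L=(K_X+tL)^n+(-K_X)\cdot(K_X+tL)^{n-1}$, and closes with two black boxes: Serrano's Lemma~1.3 when $(K_X+tL)^n>0$, and a basepoint-free-type result of Matsuki (as in Fukuda's Lemma~1.2) to contradict strict nefness when $(K_X+tL)^n=0$. You instead exploit the \emph{full} ampleness of the restriction and verify Nakai--Moishezon on every $V\subseteq X$: nefness of $K_X+(n+1)L$ supplies the endpoint inequality $(K_X+(n+1)L)^d\cdot V\ge 0$, and for $V\not\subseteq L$ strict nefness of $L$ forces $L\cdot V$ to be a nonzero effective $(d-1)$-cycle supported in $L$, on which the ample $(K_X+tL)|_L$ makes the $t$-derivative strictly positive. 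Your approach is more elementary---it avoids Matsuki's theorem entirely---at the price of checking all intermediate dimensions rather than only the top one. One small cleanup: the phrase ``Kleiman--Mori cone decomposition of $[C]$'' is imprecise as stated, since the cone theorem does not literally decompose an arbitrary curve class with the required control; the clean version is that the cone theorem plus the length bound give nefness of $K_X+(n+1)L$ directly, whence $K_X+tL=(K_X+(n+1)L)+(t-n-1)L$ is strictly nef for $t>n+1$ because $L$ is. This is exactly Serrano's Lemma~1.1, which the paper also invokes.
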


Our inductive approach is inspired by the note \cite{Fukuda04}, focusing on 
a special case of Conjecture \ref{serrano}
proposed by Campana and Peternell 
\cite{CampanaPeternell91} a few years before the general formulation by Serrano: 

\begin{conj}[Campana-Peternell] \label{ws}Let $X$ be a smooth projective variety with $-K_X$ strictly nef. Then $-K_X$ is ample, i.e. $X$ is Fano.
\end{conj}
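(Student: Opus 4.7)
The plan is to apply Theorem \ref{effser} to a smooth prime divisor $L \in |-mK_X|$ for some integer $m \ge 1$. Granting the existence of such an $L$, the divisor $L$ is strictly nef on $X$ because $L \sim -m K_X$ and $-K_X$ is strictly nef by hypothesis; so under the assumption of Serrano's Conjecture \ref{serrano} in dimension $n-1$, Theorem \ref{effser} yields that $K_X + tL$ is ample for every rational $t > n+1$. A direct computation gives $K_X + tL = (1-tm)K_X = (tm-1)(-K_X)$, and since $tm - 1 > 0$ for $t > n+1$ and $m \ge 1$, this is equivalent to the desired ampleness of $-K_X$.

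As a preliminary sanity check, strict nefness of $-K_X$ implies nefness, and for any ample divisor $H$ on $X$ the intersection number $-K_X \cdot H^{n-1}$ is strictly positive, since the cycle $H^{n-1}$ is represented by an effective combination of irreducible curves each meeting $-K_X$ positively. In particular $K_X$ is not pseudo-effective, so by the Boucksom-Demailly-Paun-Peternell theorem $X$ is uniruled, which is at least consistent with the expected effectiveness of $-mK_X$ for suitable $m$.

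The principal obstacle is therefore the existence of a smooth prime divisor in $|-mK_X|$ for some $m \ge 1$. Non-emptiness, $H^0(X, -mK_X) \ne 0$, is already an Ambro-type effectiveness statement for the nef divisor $-K_X$ and is open in this generality; and even granting non-emptiness, extracting a smooth irreducible member via Bertini would require base-point freeness of $|-mK_X|$ outside a sufficiently small locus, which is essentially the conclusion we are after. In short, the inductive step reduces Conjecture \ref{ws} in dimension $n$ (under the inductive assumption of Serrano in dimension $n-1$) to this effectiveness-and-genericity problem for multiples of $-K_X$, and this is one of the main reasons the conjecture has so far resisted proof beyond dimension three.
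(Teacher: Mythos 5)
This statement is the Campana--Peternell conjecture, which the paper does not prove: it is recorded as an open conjecture, and the only result the paper offers in its direction is Theorem \ref{fukgen}, a logarithmic variant for pairs $(X,\Delta)$ whose simple normal crossing boundary has more than $n-4$ components, so that the minimal strata have dimension at most three, where Serrano's conjecture is (essentially) known. There is therefore no ``paper proof'' to match your argument against, and your proposal, as you yourself acknowledge in its final paragraph, is not a proof either.

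The gap you identify is genuine and is the fatal one: to invoke Theorem \ref{effser} you need a smooth prime divisor $L$ with $L\sim -mK_X$, and neither the non-vanishing $H^0(X,-mK_X)\neq 0$ nor the existence of a smooth irreducible member of $|-mK_X|$ is available --- the former is an open Ambro-type effectivity statement (note that $-K_X$ being strictly nef need not be big, so Conjecture \ref{a+} would not apply even if it were known), and the latter would require base-point freeness that is close to the conclusion sought. There is a second conditional layer you should make equally explicit: Theorem \ref{effser} itself assumes Serrano's conjecture in dimension $n-1$, which is open for $n-1\geq 4$ and not fully settled even for $n-1=3$ (Calabi--Yau threefolds are excluded in the known cases). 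So your argument is a double reduction of one open conjecture to two others, not a proof. The BDPP uniruledness observation is correct but produces rational curves, not sections of $-mK_X$, and cannot close the gap. If you want to extract something unconditional in the spirit of your reduction, the paper's own route is instructive: impose hypotheses (a boundary with many components) that force the relevant adjoint restriction to live on a variety of dimension $\leq 3$, where the needed ampleness criteria are theorems rather than conjectures.
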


We point out that the main result obtained for 4-dimensional logarithmic pairs $(X,\Delta)$ in \cite{Fukuda04} may be generalized to suitable $n$-dimensional pairs (see Theorem \ref{fukgen}). 

\subsection{Acknowledgements}

We are grateful to Marco Andreatta for many inspiring conversations and to Mauro Varesco for a careful reading of the manuscript.
This research project was partially supported by GNSAGA of INdAM and by PRIN 2017 “Moduli Theory and Birational Classification”.



\section{$k$-big divisors and the generalized Ambro Conjecture}

Let $X$ be a smooth projective variety of dimension $n$. 
\begin{defn}\label{k-bigness}
Fix an integer $0\leq k<n$. We say that a nef divisor $L$ is $k$-big if  
$\nu(L) \ge n-k$, where 
\begin{equation}\label{numericaldim}
\nu(L) = \max \{d: L^dH^{n-d} > 0 \textrm{ for some ample divisor } H \}
\end{equation}
denotes the numerical dimension of a nef divisor $L$.
\end{defn}

Notice that by definition any $0$-big divisor is big, exactly as any $0$-ample divisor 
in the sense of \cite{Totaro13} is ample. More generally, the following holds:

\begin{lem}\label{kbigness}
A nef divisor $L$ is $k$-big for some integer $k$ with $0\leq k<n$ if and only if 
$$
L^{n-k}\cdot H_1\cdot H_2\cdot\ldots\cdot H_k>0
$$
for some ample divisors $H_1,\ldots, H_k$. 
\end{lem}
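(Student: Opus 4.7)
The plan is to prove both directions, with one being essentially tautological and the other reducing to Kleiman's positivity theorem via a multilinearity trick.

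For the forward direction, suppose $L$ is $k$-big, i.e.\ $\nu(L) \geq n-k$. By the definition \eqref{numericaldim} of $\nu(L)$ there exists an ample divisor $H$ with $L^{n-k} \cdot H^{k} > 0$. Then setting $H_1 = H_2 = \cdots = H_k = H$ gives the desired inequality.

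For the converse, assume $L^{n-k} \cdot H_1 \cdots H_k > 0$ for some ample $H_1,\ldots,H_k$. The idea is to package the $H_i$ into a single ample class. Put $H := H_1 + \cdots + H_k$, which is ample as a sum of ample divisors. By the multinomial theorem one has
$$
H^k \;=\; \sum_{(i_1,\ldots,i_k)} H_{i_1}\cdot H_{i_2}\cdots H_{i_k},
$$
where the sum runs over all ordered $k$-tuples with entries in $\{1,\ldots,k\}$. Intersecting with $L^{n-k}$ and isolating the contribution from tuples with pairwise distinct entries (which appears with coefficient $k!$) yields
$$
L^{n-k}\cdot H^k \;=\; k!\,\bigl(L^{n-k}\cdot H_1\cdot H_2\cdots H_k\bigr) \;+\; \sum_{\text{remaining tuples}} L^{n-k}\cdot H_{i_1}\cdots H_{i_k}.
$$

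The crucial point is that each summand on the right-hand side is nonnegative. This is the content of Kleiman's theorem on intersections of nef classes: any top-degree intersection product of nef divisors is $\ge 0$, and here $L$ is nef by hypothesis while each $H_{i_j}$ is ample, hence nef. Combined with the strict positivity of the leading term $k!\,(L^{n-k}\cdot H_1\cdots H_k)$, this gives $L^{n-k}\cdot H^k > 0$. Hence $\nu(L) \ge n-k$ by Definition \ref{k-bigness}, so $L$ is $k$-big.

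The only nontrivial ingredient is invoking Kleiman's positivity to handle the cross terms in the expansion; everything else is a clean application of multilinearity and the fact that the ample cone is closed under sums.
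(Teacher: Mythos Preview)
Your proof is correct, but the converse direction differs from the paper's. The paper argues by contradiction: assuming $L^{n-k}H^k=0$ for every ample $H$, it invokes \cite[II, Proposition 6.3]{Nakayama04} to conclude that the cycle class $L^{n-k}$ is homologically trivial, whence $L^{n-k}\cdot H_1\cdots H_k=0$ for any ample $H_1,\ldots,H_k$. Your approach is more elementary and direct: you avoid the reference to Nakayama entirely by taking $H=H_1+\cdots+H_k$, expanding $H^k$ multilinearly, and using only the standard nonnegativity of intersections of nef classes (Kleiman). The trade-off is that the paper's route gives a slightly stronger intermediate statement (homological triviality of $L^{n-k}$ when the numerical dimension drops), while yours needs nothing beyond first principles of nef positivity.
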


\begin{proof}
To check the "if" part, assume by contradiction that $L^{n-k}H^{k}=0$ for any ample divisor $H$. 
By \cite[II, Proposition 6.3]{Nakayama04}, we have that $L^{n-k}$ is 
homologically equivalent to zero, hence 
$L^{n-k}\cdot H_1\cdot H_2\cdot\ldots\cdot H_k=0$
for any choice of ample divisors $H_1,\ldots, H_k$.
Conversely, the "only if" part is straightforward (just take 
$H_1 = \ldots = H_k = H$).
\end{proof}

In \cite{BeltramettiSommese95}, p. 5, a slightly different notion 
of $k$-bigness is introduced. By \cite[Lemma 2.5.8]{BeltramettiSommese95}, 
if a nef divisor is $k$-big in their sense then it is 
$k$-big in ours too, in particular a $k$-ample divisor 
is $k$-big in both senses (see \cite{BeltramettiSommese95}, p. 44). 
On the other hand, \cite[Example 1.8.2.2]{BeltramettiSommese95} shows that the two definitions of $k$-bigness do not coincide.

Our notion of $k$-bigness satisfies the following monotonicity property.

\begin{prop}
If L is $k$-big then it is $(k+1)$-big.
\end{prop}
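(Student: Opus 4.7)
The plan is to deduce the claim directly from Definition \ref{k-bigness}, which phrases $k$-bigness purely in terms of the numerical dimension $\nu(L)$. Once the definition is unpacked, the conclusion becomes essentially a tautology: a weaker lower bound on $\nu(L)$ follows trivially from a stronger one.

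Concretely, the argument would run as follows. By hypothesis $L$ is $k$-big, so Definition \ref{k-bigness} gives $\nu(L) \geq n-k$. Assuming that $(k+1)$-bigness is defined, i.e.\ $k+1 < n$, the required conclusion is $\nu(L) \geq n-(k+1) = n-k-1$. But $n-k-1 < n-k \leq \nu(L)$, and the desired inequality holds immediately.

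One could alternatively argue through the characterization in Lemma \ref{kbigness}, starting from $L^{n-k} \cdot H_1 \cdots H_k > 0$ for some ample divisors $H_1,\ldots,H_k$ and trying to produce ample divisors $H_1',\ldots,H_{k+1}'$ with $L^{n-k-1} \cdot H_1' \cdots H_{k+1}' > 0$. This route is less clean, since trading a factor of the nef divisor $L$ for an additional ample factor does not, a priori, preserve strict positivity without further work (one would essentially be reproving the monotonicity of $\nu(L)$ through Khovanskii--Teissier-type considerations). The numerical dimension viewpoint sidesteps this by construction, because the equivalence built into Definition \ref{k-bigness} automatically transfers positivity $L^d \cdot H^{n-d} > 0$ from $d = \nu(L)$ down to all smaller exponents.

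The only real subtlety is the boundary case $k = n-1$, in which $(k+1) = n$ falls outside the range $0 \leq k < n$ imposed by the definition; the proposition must therefore be read under the implicit restriction $k \leq n-2$. No genuine obstacle arises, and I would expect the written proof to occupy only one or two lines.
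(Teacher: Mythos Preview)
Your proof is correct, and in fact cleaner than the paper's. You exploit that Definition \ref{k-bigness} is stated directly in terms of the numerical dimension: $k$-big means $\nu(L)\ge n-k$, so $(k+1)$-big is the weaker inequality $\nu(L)\ge n-k-1$, which follows at once. This is a one-line tautology, exactly as you say.

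The paper instead argues through the characterization of Lemma \ref{kbigness}: starting from $L^{n-k}\cdot H_1\cdots H_k>0$, it writes the nef divisor $L$ as a limit of ample classes $H_t$ and uses continuity of intersection numbers to conclude that $L^{n-k-1}\cdot H_t\cdot H_1\cdots H_k>0$ for $t\gg 0$, yielding the required $(k+1)$ ample factors. This is precisely the ``less clean'' route you anticipated in your third paragraph. What it buys is an explicit mechanism for trading one factor of $L$ for an extra ample factor, which is essentially a proof that the set $\{d:L^dH^{n-d}>0\text{ for some ample }H\}$ is downward closed; your approach bypasses this entirely because the proposition, read literally through Definition \ref{k-bigness}, never asks for it. Your remark on the boundary case $k=n-1$ is also apt and is left implicit in the paper.
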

\begin{proof}
If $L$ is a $k$-big divisor, then by Lemma \ref{kbigness} there exist ample divisors $H_1,\ldots, H_k$ such that $L^{n-k}\cdot H_1\cdot\ldots\cdot H_k>0$.
Moreover since $L$ is nef, it can be written as a limit 
$$L=\lim_{t\to +\infty} H_t, \quad \text{ with }H_t \text{ ample.}$$
Then we have 
$$0<L^{n-k-1}\cdot L\cdot H_1\cdot\ldots\cdot H_k=L^{n-k-1}\cdot\lim_{t\to +\infty} H_t\cdot H_1\cdot\ldots\cdot H_k,$$
which implies that for $t\gg 0$
$$L^{n-k-1}\cdot H_t\cdot H_1\cdot\ldots\cdot H_k>0,$$
so by Lemma \ref{kbigness} $L$ is $(k+1)$-big. 
\end{proof}

Definition \ref{k-bigness} is motivated by the following natural generalization of Kawamata-Viehweg vanishing theorem.
Despite the proof is well-known (see for instance \cite[Lemma 2.1]{LazicPeternell17}), we report it here for completeness sake.

\begin{prop}\label{kvanishing}
Let $L$ be $k$-big divisor. Then 
$$ H^i(X,K_X+L)=0\quad \forall i>k.$$
\end{prop}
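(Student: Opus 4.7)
The plan is to prove this by induction on $k$, with the base case $k=0$ being the classical Kawamata--Viehweg vanishing (since a $0$-big divisor is, by definition, nef and big). For the inductive step, I would use the standard trick of cutting by a general very ample divisor and combining two applications of the inductive hypothesis (one on $X$, one on the hyperplane section) through the long exact sequence.

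More precisely, assume the statement has been established at level $k-1$ on every smooth projective variety, and let $L$ be $k$-big on $X$. Pick a very ample divisor $H$ on $X$; by Bertini one may assume a general member is smooth. The short exact sequence
\begin{equation*}
0\to \mathcal{O}_X(K_X+L)\to \mathcal{O}_X(K_X+L+H)\to \mathcal{O}_H(K_H+L|_H)\to 0
\end{equation*}
(the last term rewritten via adjunction) gives the long exact sequence
\begin{equation*}
\cdots\to H^{i-1}(H,K_H+L|_H)\to H^i(X,K_X+L)\to H^i(X,K_X+L+H)\to\cdots
\end{equation*}
so it suffices to show that both flanking groups vanish for $i>k$.

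For the right-hand term, I would check that $L+H$ is $(k-1)$-big on $X$: it is nef, and by expanding and using $H_1=\dots=H_{k-1}=H$ together with nefness of $L$,
\begin{equation*}
(L+H)^{n-k+1}\cdot H^{k-1}=\sum_{i=0}^{n-k+1}\binom{n-k+1}{i}L^i\cdot H^{n-i}\ \ge\ (n-k+1)\,L^{n-k}\cdot H^k>0
\end{equation*}
by the hypothesis of $k$-bigness and Lemma \ref{kbigness}. Hence the inductive hypothesis on $X$ yields $H^i(X,K_X+L+H)=0$ for $i>k-1$, in particular for $i>k$. For the left-hand term, I would verify that $L|_H$ is $(k-1)$-big on the $(n-1)$-dimensional variety $H$: it is nef, and taking $H|_H$ (ample by very ampleness of $H$) as the auxiliary ample class,
\begin{equation*}
(L|_H)^{n-k}\cdot (H|_H)^{k-1}=L^{n-k}\cdot H^k>0.
\end{equation*}
The inductive hypothesis applied on $H$ then gives $H^{i-1}(H,K_H+L|_H)=0$ for $i-1>k-1$, i.e.\ for $i>k$. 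Combining the two vanishings in the exact sequence concludes the induction.

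The only mildly delicate point is the bigness check for $L+H$: one must be careful that the intermediate intersection numbers $L^i\cdot H^{n-i}$ are non-negative (which follows from nefness of $L$ and ampleness of $H$) so that the leading positive term is not cancelled. Everything else is a routine combination of Bertini, adjunction, and the long exact sequence.
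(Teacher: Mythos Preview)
Your argument is correct and follows the same core strategy as the paper: cut by a smooth very ample $H$, use adjunction, and control the two outer terms of the long exact sequence inductively. Two minor differences are worth noting. First, the paper inducts on $\dim X$ rather than on $k$; more importantly, for the middle term it simply observes that $L+H$ is \emph{ample} (nef plus ample), so Kodaira vanishing kills $H^i(X,K_X+L+H)$ for all $i>0$ directly --- your verification that $L+H$ is $(k-1)$-big is correct but unnecessary. Second, when you assert $L^{n-k}\cdot H^k>0$ for your freely chosen very ample $H$, you are implicitly using that this positivity, once it holds for one ample class, holds for all (write $mH=A+\text{nef}$ for $m\gg 0$ and expand); alternatively, as the paper does, just take $H$ to be a very ample multiple of one of the ample $H_i$ supplied by Lemma~\ref{kbigness}.
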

\begin{proof}
If $k=0$ then $L$ is nef and big, so the result follows from Kawamata-Viehweg vanishing theorem. Assume now $k \ge 1$. 
We argue by induction on the dimension of $X$.
If $\dim X=1$ then the claim is true.
Suppose now that the result is true for any $k$-big divisor with $k\geq 1$ on varieties of dimension $\leq n-1$. 
If $L$ is a $k$-big divisor, then there exist ample divisors $H_1,\ldots H_k$ such that $L^{n-k}\cdot H_1\cdot\ldots\cdot H_k>0$.
Up to multiples, we may assume that $H_k$ is very ample and smooth, so we can restrict to 
$H_k$ and we have
$$0<L^{n-k}\cdot H_1\cdot\ldots\cdot H_k=(L^{(n-1)-(k-1)}H_1\cdot \ldots \cdot H_{k-1})_{\mid H_k}.$$
This implies that $L$ is $(k-1)$-big when restricted to $H_k$.
\smallskip
Consider now the short exact sequence 
$$0\rightarrow \mathcal{O}_X(K_X+L)\rightarrow \mathcal{O}_X(K_X+L+H_k)\rightarrow \mathcal{O}_{H_k}(K_H+L_{\mid H_k})\rightarrow 0,$$
which induces the long exact sequence in cohomology
\begin{equation}
    \ldots \rightarrow H^i(X,K_X+L)\rightarrow H^i(X,K_X+L+H_k)\rightarrow H^i(H_k,K_H+L_{\mid H_k})\rightarrow \ldots
\end{equation}
Since $L+H_k$ is the sum of an ample and a nef divisor, hence it is ample, by Kodaira vanishing theorem we have $$H^i(X,K_X+L+H_k)=0\quad \forall i>0,$$
which yields $H^{i+1}(X,K_X+L)\cong H^i(H_k,K_{H_k}+L_{\mid H_k})$ for all $i>0$.
Moreover, by induction we have 
$$H^i(H_k,K_{H_k}+L_{\mid H_k})=0\quad \forall i>k,$$
which implies the result.
\end{proof}

Finally we provide the first evidences to Conjecture \ref{a+}. 

\begin{prop}\label{ambrononuniruled}
Let $X$ be a non-uniruled surface. Then Conjecture \ref{a+} holds.
\end{prop}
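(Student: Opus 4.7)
The plan is to combine Proposition~\ref{kvanishing} (the generalized Kawamata--Viehweg vanishing) with Riemann--Roch on the surface $X$. Writing $L := D - K_X$, the hypothesis says that $L$ is nef and $1$-big; since $\dim X = 2$, this amounts to requiring $\nu(L) \geq 1$, i.e.\ $L$ is nef and not numerically trivial.

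The first step is to apply Proposition~\ref{kvanishing} with $k=1$ to the $1$-big divisor $L$, which gives
\[
H^i(X, D) \;=\; H^i(X, K_X + L) \;=\; 0 \qquad \text{for every } i > 1.
\]
In particular $h^0(X, D) \geq \chi(X, D)$, and Riemann--Roch on the surface yields
\[
\chi(X, D) \;=\; \chi(\mathcal{O}_X) + \tfrac{1}{2}\, D \cdot (D - K_X) \;=\; \chi(\mathcal{O}_X) + \tfrac{1}{2}\, D \cdot L.
\]

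Next, I combine two positivity facts. On one hand, since $D$ is ample and $L$ is nef and not numerically trivial, the intersection number $D \cdot L$ is a strictly positive integer. On the other hand, since $X$ is non-uniruled, I need the inequality $\chi(\mathcal{O}_X) \geq 0$: since $\chi(\mathcal{O}_X)$ is a birational invariant of smooth projective surfaces, one can reduce to a minimal model of $X$ and invoke the Enriques--Kodaira classification, which gives $\chi(\mathcal{O}_X) \geq 0$ in each of the cases $\kappa = 0, 1, 2$ (via the Miyaoka--Yau inequality $K_X^2 \leq 9\chi$ for the general type case, and via $c_2 \geq 0$ together with Noether's formula $12\chi = K_X^2 + c_2$ in the remaining cases).

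Putting the two inequalities together gives $\chi(X, D) > 0$, whence $h^0(X, D) > 0$, as required. The only non-formal ingredient is the non-negativity of $\chi(\mathcal{O}_X)$ for non-uniruled surfaces; once the generalized vanishing of Proposition~\ref{kvanishing} has been applied, the rest is a direct Riemann--Roch computation exploiting that the $1$-bigness of $D-K_X$ is precisely the amount of positivity required to kill $H^2(X,D)$.
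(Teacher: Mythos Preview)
Your proof is correct and follows essentially the same approach as the paper: apply the generalized vanishing (Proposition~\ref{kvanishing}) to kill $H^2(X,D)$, use Riemann--Roch to compute $\chi(X,D)=\chi(\mathcal{O}_X)+\tfrac{1}{2}D\cdot(D-K_X)$, observe that the intersection term is strictly positive because $D$ is ample and $D-K_X$ is $1$-big, and conclude via $\chi(\mathcal{O}_X)\geq 0$ for non-uniruled surfaces. The paper simply cites \cite{Beauville10} for the last inequality, whereas you sketch the classification argument; otherwise the arguments are identical.
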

\begin{proof}
If $D-K_X$ is $1$-big, then by Proposition \ref{kvanishing} we have $H^i(X,D)=0$ for $i=2$. 
As a result, $\chi :=\chi(X,D)=H^0(X,D)-H^1(X,D)$, which implies $H^0(X,D)\geq \chi$.
By Riemann-Roch we have 
$$\chi=\dfrac{1}{2}D\cdot(D-K_X)+\chi(X,\mathcal{O}_X).$$
Since $D-K_X$ is 1-big and $D$ is ample, the first term in the sum is positive, thus $\chi>\chi(X,\mathcal{O}_X)$. The result follows as for any non-uniruled surface $X$ we have $\chi(X,\mathcal{O}_X)\geq 0$ (see for instance \cite[Theorem X.4]{Beauville10}).
\end{proof}

\begin{prop}\label{ambrominimal3}
Let $X$ be a minimal threefold. Then Conjecture \ref{a+} holds.
\end{prop}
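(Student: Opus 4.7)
The plan is to follow the same strategy as in Proposition~\ref{ambrononuniruled}: apply Proposition~\ref{kvanishing} to kill the higher cohomology of $D$, and then force $\chi(X,D) > 0$ by a Riemann--Roch computation adapted to dimension three. Setting $L := D - K_X$ (nef and $1$-big by hypothesis), Proposition~\ref{kvanishing} with $k=1$ yields $H^i(X,D) = H^i(X, K_X+L) = 0$ for every $i \geq 2$, hence $h^0(X,D) \geq \chi(X, K_X+L)$, and the problem is reduced to showing $\chi(X, K_X+L) > 0$.

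Next, I would expand $\chi(X, K_X+L)$ via Hirzebruch--Riemann--Roch on the smooth threefold $X$. After collecting terms and substituting $\chi(X, \mathcal{O}_X) = -K_X \cdot c_2(X)/24$, the formula becomes
$$
\chi(X, K_X+L) \;=\; -\chi(X, \mathcal{O}_X) + \frac{L \cdot c_2(X)}{12} + \frac{L \cdot K_X^2}{12} + \frac{L^2 \cdot K_X}{4} + \frac{L^3}{6}.
$$
The three intersections $L \cdot K_X^2$, $L^2 \cdot K_X$ and $L^3$ are non-negative since both $L$ and $K_X$ are nef. The two terms involving $c_2(X)$ rely on Miyaoka's theory of minimal threefolds: the inequality $K_X \cdot c_2(X) \geq 0$ gives $-\chi(X, \mathcal{O}_X) \geq 0$, while generic semipositivity (which applies because a smooth minimal threefold is non-uniruled by the non-vanishing theorem in dimension three) yields $L \cdot c_2(X) \geq 0$ for every nef $L$.

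Strict positivity will then come from $L^2 \cdot D > 0$. The $1$-bigness of $L$ combined with Lemma~\ref{kbigness} produces an ample divisor $H$ with $L^2 \cdot H > 0$; since $D - \epsilon H$ is still ample for small $\epsilon > 0$ and the triple intersection of nef divisors is non-negative by the standard limiting argument, one obtains $L^2 \cdot D \geq \epsilon L^2 \cdot H > 0$. Expanding $L^2 \cdot D = L^2 \cdot K_X + L^3$ then makes $\tfrac{1}{4} L^2 \cdot K_X + \tfrac{1}{6} L^3$ strictly positive, so $\chi(X, K_X+L) > 0$ and $h^0(X,D) > 0$.

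The hardest step is the Miyaoka input: Hirzebruch--Riemann--Roch provides no control on the sign of the two $c_2$-terms by itself, and without both $K_X \cdot c_2(X) \geq 0$ and $L \cdot c_2(X) \geq 0$ those contributions could in principle swamp the positive intersection $L^2 \cdot D$.
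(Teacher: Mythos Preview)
Your argument is correct and follows the same overall strategy as the paper: kill $H^i(X,D)$ for $i\ge 2$ via Proposition~\ref{kvanishing}, then show $\chi(X,D)>0$ by Riemann--Roch together with Miyaoka's positivity results. The difference lies in the decomposition of $\chi$ and in which form of Miyaoka's theorem is invoked.

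The paper borrows Kawamata's grouping of the Riemann--Roch terms (from \cite[Proposition~4.1]{Kawamata00}), isolating the combination $(2D-K_X)\cdot(3c_2(X)-K_X^2)$ and appealing to the pseudoeffectivity of $3c_2(X)-K_X^2$ for minimal varieties; strict positivity then comes from $D^3>0$. You instead expand $\chi(K_X+L)$ directly into monomials in $K_X$, $L$ and $c_2(X)$, and handle the two $c_2$-terms separately via $K_X\cdot c_2(X)\ge 0$ and $L\cdot c_2(X)\ge 0$ (both consequences of Miyaoka's generic semipositivity for non-uniruled varieties, applied with a nef class); strict positivity comes from $L^2\cdot D>0$, which you extract cleanly from the $1$-bigness hypothesis. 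Your route is slightly more elementary in that it avoids the Kawamata regrouping and makes the role of $1$-bigness more transparent; the paper's version has the advantage of quoting an existing computation verbatim. Substantively the two arguments are equivalent.
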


\begin{proof}
If $D-K_X$ is $1$-big, then by Proposition \ref{kvanishing}
we have $H^i(X,D)=0$ for all $i\geq2$. 
As before, we have $\chi :=\chi(X,D)=H^0(X,D)-H^1(X,D)$, which implies $H^0(X,D)\geq \chi$.
Arguing as in the proof of \cite[Proposition 4.1]{Kawamata00} we have 
\begin{eqnarray}\label{chil}
 &\qquad \chi&=\dfrac{1}{12}(2D-K_X)\cdot\left(\dfrac{1}{6}D^2+\dfrac{2}{3}D\cdot(D-K_X)+\dfrac{1}{6}(D-K_X)^2 \right)+\\
 & &\ + \dfrac{1}{72}(2D-K_X)\cdot(3c_2(X)-K_X^2)+\dfrac{1}{24}K_X\cdot c_2(X)+\chi(X,\mathcal{O}_X)
\end{eqnarray}
with $\chi(X,\mathcal{O}_X)+\dfrac{1}{24}K_X\cdot c_2(X)\geq 0$ and $3c_2(X)-K_X^2$ pseudoeffective by \cite{Miyaoka87}.
Observe that $2D-K_X=D+D-K_X$ is nef since it is the sum of two nef divisors. Then
$$\dfrac{1}{72}(2D-K_X)\cdot(3c_2(X)-K_X^2)\geq 0,$$
since it is the intersection of a nef divisor and a pseudoeffective curve. 
Now we write the first term of \eqref{chil} as 
\begin{align*}
    &\dfrac{1}{72}D^3+ \dfrac{1}{12}D\cdot \left(\dfrac{2}{3}D\cdot(D-K_X)+\dfrac{1}{6}(D-K_X)^2 \right)+\\
    &+\dfrac{1}{12}(D-K_X)\left(\dfrac{1}{6}D^2+\dfrac{2}{3}D\cdot(D-K_X)+\dfrac{1}{6}(D-K_X)^2 \right).
\end{align*}
Since $D$ is ample we have $D^3>0$, while the rest of the sum is non negative as it is given by intersection products of nef divisors. 
As a consequence, we have $h^0(X,D)\geq \chi>0$. 
\end{proof}

\section{Termination of adjunction for minimal varieties}

A Hirzebruch-Riemann-Roch computation yields 
the following auxiliary result. As pointed out by one of the referees, our argument is very similar to the proof of \cite[Theorem 4.1]{LazicPeternell17} by Lazi\'c and Peternell, which works also for varieties with terminal singularities. 

\begin{lem}\label{main} Let $X$ be a minimal smooth projective variety of dimension $n \ge 3$. If adjunction terminates in the classical sense for an effective nef and big divisor $L$ on $X$ then we have $K_X^i L^{n-i}=0$ for $n-3 \le i \le n$.  
\end{lem}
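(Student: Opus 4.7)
The plan is to combine Kawamata-Viehweg vanishing with Hirzebruch-Riemann-Roch in order to convert the termination assumption into the identical vanishing of a polynomial in $m$, then extract its top four coefficients, closing the resulting identities by Miyaoka's pseudoeffectivity of $3c_2(X)-K_X^2$. This mimics the strategy of \cite[Theorem 4.1]{LazicPeternell17}.

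First, since $K_X$ is nef and $L$ is nef and big, for every $m\ge 1$ one has $L+mK_X=K_X+(L+(m-1)K_X)$ with $L+(m-1)K_X$ nef and big; Kawamata-Viehweg therefore gives $h^i(X,L+mK_X)=0$ for all $i>0$, hence $h^0(X,L+mK_X)=\chi(X,L+mK_X)$. Since adjunction terminates in the classical sense, this Euler characteristic vanishes for all large $m$, so the polynomial $P(m):=\chi(X,L+mK_X)$ --- which has degree at most $n$ by Hirzebruch-Riemann-Roch --- vanishes identically.

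Next, I would expand
$$P(m)=\int_X e^{L+mK_X}\td(X)=\sum_{i=0}^n \frac{m^i}{i!}\,K_X^i\cdot\bigl[e^L\td(X)\bigr]_{n-i},$$
where $[\,\cdot\,]_j$ denotes the codimension-$j$ part, so that each coefficient $K_X^i\cdot[e^L\td(X)]_{n-i}$ must vanish. The cases $i=n$ and $i=n-1$ immediately yield $K_X^n=0$ and $K_X^{n-1}\cdot L=0$. Plugging in $\td_1=-K_X/2$, $\td_2=(K_X^2+c_2(X))/12$, $\td_3=-K_X\cdot c_2(X)/24$ and feeding back the relations already obtained, the cases $i=n-2$ and $i=n-3$ collapse to
$$6\,K_X^{n-2}\cdot L^2+K_X^{n-2}\cdot c_2(X)=0,\qquad 2\,K_X^{n-3}\cdot L^3+K_X^{n-3}\cdot L\cdot c_2(X)=0.$$

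Finally, I would invoke Miyaoka's generic semipositivity in the form $(3c_2(X)-K_X^2)\cdot D_1\cdots D_{n-2}\ge 0$ for arbitrary nef divisors $D_i$ on a minimal $X$. Taking $(D_1,\dots,D_{n-2})=(K_X,\dots,K_X)$ and then $(K_X,\dots,K_X,L)$, and using $K_X^n=K_X^{n-1}\cdot L=0$, this gives $K_X^{n-2}\cdot c_2(X)\ge 0$ and $K_X^{n-3}\cdot L\cdot c_2(X)\ge 0$. Combined with the obvious nef-nef nonnegativities $K_X^{n-2}\cdot L^2\ge 0$ and $K_X^{n-3}\cdot L^3\ge 0$, the two displayed identities force every individual term to vanish, giving $K_X^i\cdot L^{n-i}=0$ for $n-3\le i\le n$. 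The main subtlety --- and hence the expected obstacle --- is the invocation of Miyaoka's result in the required generality (for intersections against products of nef rather than ample classes), which is resolved by continuity of intersection numbers on the closure of the ample cone.
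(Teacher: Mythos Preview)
Your proposal is correct and follows essentially the same route as the paper: Kawamata--Viehweg turns termination into the identical vanishing of $\chi(X,L+mK_X)$, Hirzebruch--Riemann--Roch supplies the top four coefficients, and Miyaoka's inequality (together with nef nonnegativity) forces each summand to vanish. Your packaging via $P(m)=\sum_i \frac{m^i}{i!}\,K_X^i\cdot[e^L\td(X)]_{n-i}$ is a tidy variant of the paper's direct expansion into coefficients $A,B,C,D$, and your explicit remark on passing from ample to nef classes by continuity in Miyaoka's inequality makes transparent a point the paper leaves implicit.
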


\proof Since by assumption $K_X$ is nef then $(m-1)K_X+L$ is nef and big for every $m \ge 1$, hence by Kawamata-Viehweg vanishing theorem 
we have $\dim H^0(X, mK_X+L)=\chi(X, mK_X+L)$, 
a polynomial in the variable $m$ of degree at most $n$. Therefore, if adjunction terminates 
in the classical sense for $L$, then $\chi(X, mK_X+L)$ must be identically zero. 
On other hand from Hirzebruch-Riemann-Roch it follows that 
\begin{eqnarray*}
\chi(X, mK_X+L) &=& \frac{(mK_X+L)^n}{n!} - \frac{1}{2} K_X \frac{(mK_X+L)^{n-1}}{(n-1)!} \\
& & + \frac{1}{12}(K_X^2+c_2) \frac{(mK_X+L)^{n-2}}{(n-2)!} \\
& & - \frac{1}{24} K_X c_2 \frac{(mK_X+L)^{n-3}}{(n-3)!} + o(m^{n-3})\\
&=& A m^n + B m^{n-1} + C m^{n-2} + D m^{n-3} + \ldots
\end{eqnarray*}
with 
\begin{eqnarray*}
A &=& \frac{K_X^n}{n!}\ , \\
B &=& n \frac{K_X^{n-1}L}{n!} - \frac{1}{2} K_X \frac{K_X^{n-1}}{(n-1)!}\ , \\
C &=& \frac{n(n-1)}{2} \frac{K_X^{n-2}L^2}{n!} - \frac{1}{2} (n-1) K_X \frac{K_X^{n-2}L}{(n-1)!}\ , \\
& &+ \frac{1}{12}(K_X^2+c_2) \frac{K_X^{n-2}}{(n-2)!} \\
D &=& \frac{n(n-1)(n-2)}{6}  \frac{K_X^{n-3}L^3}{n!}  - \frac{1}{2} \frac{(n-1)(n-2)}{2} K_X \frac{K_X^{n-3}L^2}{(n-1)!} \\
& &+ \frac{1}{12}(n-2)(K_X^2+c_2) \frac{K_X^{n-3}L}{(n-2)!} - \frac{1}{24} K_X c_2 \frac{K_X^{n-3}}{(n-3)!}\ .
\end{eqnarray*}
Now from $A=B=0$ we get immediately $K_X^n = K_X^{n-1}L = 0$. Next we set $C=0$: since $K_X$ is nef $K_X^{n-2}L^2 \ge 0$ 
and by Miyaoka's inequality (see for instance \cite[Theorem 3.12]{MiyaokaPeternell97}) $c_2 K_X^{n-2}\ge 0$, hence we deduce 
$K_X^{n-2}L^2 = 0$ and $c_2 K_X^{n-2}= 0$. Finally, let $D=0$: once again, $K_X^{n-3}L^3 \ge 0$ since $K_X$ is nef and 
$c_2 K_X^{n-3}L \ge 0$ by Miyaoka's inequality, hence $K_X^{n-3}L^3 = 0$ as claimed. 
\qed

\smallskip

Recall that, when $K_X$ is nef, one sets $\nu(X) := \nu(K_X) \ge 0$, where the numerical dimension $\nu$ of a nef divisor is defined as in \eqref{numericaldim}.
\smallskip


\emph{Proof of Proposition \ref{not}.} 
If $n=3$ the claim is immediate from Lemma \ref{main} since 
$L^n > 0$ for every nef and big divisor $L$. 
Next, observe that Lemma \ref{main} and Lemma \ref{kbigness} imply $\nu(X)=0$ for $n=4$ and $\nu(X)\leq 1$ for $n=5$.
Now, if $n=4$ or $n=5$ and $\nu(X)=0$, then by \cite{Kawamata13}
the Kodaira dimension of $X$ would be zero, contradicting \cite[Proposition 2]{AndreattaFontanari18}.
Finally, if $n=5$ and $\nu(X)=1$, then by assumption $\chi(X, \mathcal{O}_X) \ne 0$ and by \cite[Theorem B, (ii)]{LazicPeternell18}, the Kodaira dimension of $X$ would be nonnegative, contradicting 
\cite[Proposition 2]{AndreattaFontanari18}. 
\qed

\smallskip

In the same vein, from the results of \cite{Horing12} we may easily deduce Proposition \ref{n-1}.

\smallskip

\emph{Proof of Proposition \ref{n-1}.} Since by assumption $K_X$ is nef then $(m-1)K_X+aL$ is nef and big for every $m \ge 1$ and 
$a \ge 1$, hence by Kawamata-Viehweg vanishing theorem 
we have $\dim H^0(X, mK_X+aL)=\chi(X, mK_X+aL)$, a polynomial in the variable $m$ of degree at most $n$. Therefore, 
if we assume by contradiction that adjunction terminates in the classical sense for $aL$, then $\chi(X, mK_X+aL)$ must be identically zero. 

In order to prove the first statement we just recall that
$\chi(X, K_X+(n-1)L) = \dim H^0(X, K_X+(n-1)L) \ne 0$ by 
\cite[Theorem 1.2]{Horing12}, and this 
contradiction ends the proof.

In order to prove the second statement we check by induction on $n$
that $H^0(X, \mathcal{O}_X(K_X+(n-2)L)) \ne 0$, hence getting a 
contradiction. The case $n=3$ holds by \cite[Theorem 1.5]{Horing12}. Assume now $n \ge 4$. 
By Bertini theorems the general element $L$ of 
$\vert L \vert$ is a smooth projective variety of dimension $n-1$ and by adjunction 
we have $ (K_X + (n-2)L)_{\vert L}=K_L + (n-3) L_{\vert L}$. The exact sequence: 
$$
0 \to \mathcal{O}_X(K_X+(n-3)L) \to \mathcal{O}_X(K_X+(n-2)L) \to \mathcal{O}_H(K_L+(n-3)L_{\vert L}) \to 0
$$
induces the exact sequence in cohomology: 
\begin{eqnarray*}
\ldots &\to& H^0(X, \mathcal{O}_X(K_X+(n-2)L)) \to H^0(L, \mathcal{O}_L(K_L+(n-3)L_{\vert L}) \to \\
&\to& H^1(X, \mathcal{O}_X(K_X+(n-3)L)) \to \ldots
\end{eqnarray*}
By the Kodaira vanishing theorem we have $H^1(X, \mathcal{O}_X(K_X+(n-3)L)) = 0$ and by induction we have 
$H^0(X, \mathcal{O}_L(K_L+(n-3)L)) \ne 0$, hence we deduce $H^0(X, \mathcal{O}_X(K_X+(n-2)L)) \ne 0$ as well.  
\qed


\section{Towards Serrano conjecture}

First we prove that if Conjecture \ref{serrano} 
holds in dimension $n-1$ then it holds in 
dimension $n$ for smooth prime divisors.

\smallskip

\emph{Proof of Theorem \ref{effser}.}
By adjunction we have
$$(K_X+tL)_{\mid L}=\left(K_X+L+(t-1)L\right)_{\mid L}=K_L+(t-1)L_{\mid L},$$
which is ample since Conjecture \ref{serrano} is 
assumed to hold in dimension $n-1$. 
This implies that $(K_X+tL)_{\mid L}$ is big, i.e.
$$\left((K_X+tL)_{\mid L}\right)^{n-1}=(K_X+tL)^{n-1}\cdot L>0.$$
Hence we deduce 
$$0 < (K_X+tL)^{n-1}\cdot (K_X+tL-K_X)=(K_X+tL)^{n}+ (-K_X) \cdot (K_X+tL)^{n-1}.$$
In order for the above sum to be strictly positive, we must have either $(K_X+tL)^{n}>0$ or 
$(-K_X) \cdot (K_X+tL)^{n-1} > 0$. 
Notice that $(K_X+tL)^{n} \ge 0$ by \cite[Lemma 1.1]{Serrano95}.
If $(K_X+tL)^{n}>0$ then $K_X+tL$ is ample by \cite[Lemma 1.3]{Serrano95}.
If instead $(-K_X) \cdot (K_X+tL)^{n-1}>0$ and $(K_X+tL)^{n}=0$ then $K_X+tL$ is not strictly nef by \cite{Matsuki94} (see also \cite[Lemma 1.2]{Fukuda04}), 
contradicting \cite[Lemma 1.1]{Serrano95}.
\qed

\smallskip

Let now $\Delta$ be a reduced Cartier divisor on $X$, with $\Delta=\sum_{i=1}^k\Delta_i$.

\begin{defn}\label{simple}
A Cartier divisor $\Delta=\sum_{i=1}^k\Delta_i$ on $X$ is called simple normal crossing if it is reduced and for all $J\subset \{1, \ldots, k \}$ the subvariety $\Delta_J=\bigcap_{j\in J} \Delta_j$ is smooth with irreducible components of codimension $\vert J\vert$.
\end{defn}

In what follows, by adapting Fukuda's argument in \cite{Fukuda04} for fourfolds with normal 
crossing boundary, we provide a log-version of Conjecture \ref{ws} for varieties of 
arbitrary dimension with simple normal crossing boundary.

\begin{thm}\label{fukgen}
Let $X$ be a smooth variety of dimension $n$ and let $\Delta=\sum_{j=1}^k\Delta_j$ be a simple normal crossing divisor on $X$ with $k>n-4$. Then 
\begin{enumerate}[(i)]
\item $K_X+\Delta$ strictly nef $\Rightarrow$ $K_X+\Delta$ ample;
\item $-(K_X+\Delta)$ strictly nef $\Rightarrow$ $-(K_X+\Delta)$ ample.
\end{enumerate}
\end{thm}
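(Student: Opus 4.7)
The plan is to prove both parts simultaneously by induction on $n$, mimicking the dichotomy that worked in the proof of Theorem~\ref{effser}. Set $L:=K_X+\Delta$ in case (i) and $L:=-(K_X+\Delta)$ in case (ii); in either case $L$ is strictly nef by hypothesis. Since $k>n-4$, once $n\geq 4$ the boundary has at least one component $\Delta_i$, to which I would restrict $L$. The adjunction formula reads
$(K_X+\Delta)|_{\Delta_i}=K_{\Delta_i}+(\Delta-\Delta_i)|_{\Delta_i}$,
and Definition~\ref{simple} ensures that the residual divisor $(\Delta-\Delta_i)|_{\Delta_i}$ is again simple normal crossing on $\Delta_i$, with $k-1$ summands satisfying $k-1>(n-1)-4$. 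The inductive hypothesis therefore yields that $L|_{\Delta_i}$ is ample on $\Delta_i$, so $L^{n-1}\cdot\Delta_i>0$ for every $i$, and summing over the components gives $L^{n-1}\cdot\Delta>0$.

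Substituting $\Delta=L-K_X$ in case (i) and $\Delta=-L-K_X$ in case (ii), this inequality translates into $L^n+L^{n-1}\cdot(-K_X)>0$ in case (i) and $L^{n-1}\cdot(-K_X)>L^n$ in case (ii). Since $L^n\geq 0$ by \cite[Lemma 1.1]{Serrano95}, the same dichotomy as in the proof of Theorem~\ref{effser} then applies: either $L^n>0$, in which case $L$ is ample by \cite[Lemma 1.3]{Serrano95}, or $L^n=0$, in which case $L^{n-1}\cdot(-K_X)>0$ and this contradicts the strict nefness of $L$ via Matsuki's theorem (see \cite{Matsuki94}, or \cite[Lemma 1.2]{Fukuda04}).

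The main obstacle is the base of the induction: the numerology $k>n-4$ is chosen precisely so that the inequality $k-1>(n-1)-4$ is preserved under restriction, but since the component count drops by one at each descent the argument must eventually terminate in dimension at most four with at least one boundary component, which is exactly Fukuda's original setting \cite{Fukuda04} and must serve as the starting point of the induction. A secondary technical subtlety is that two boundary components might be disjoint, so that $\Delta_j|_{\Delta_i}=0$ and one summand is actually lost on restriction; one has to argue, via connectedness of the dual complex of $\Delta$ or an explicit component bookkeeping, that the inductive hypothesis remains available throughout the descent.
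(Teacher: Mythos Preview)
Your inductive approach is workable in spirit, but it is not the route taken in the paper. Rather than descending one component at a time, the paper restricts $\pm(K_X+\Delta)$ directly to the minimal strata $\Gamma\in MS(\Delta)$, which under Definition~\ref{simple} are the irreducible components of $\bigcap_{j=1}^k\Delta_j$ and hence have dimension $n-k<4$. Repeated adjunction gives $(K_X+\Delta)|_\Gamma=K_\Gamma$, so Serrano's conjecture in dimension $\leq 3$ yields ampleness on every minimal stratum in one stroke; the paper then invokes \cite[Theorem~1.5]{Fukuda04} together with \cite[Lemma~1.4]{campanachenpeternell08} as a black box to conclude. In effect your step-by-step induction unpacks the content of Fukuda's Theorem~1.5, while the paper simply cites it.

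Two points in your write-up need adjustment. First, your appeal to \cite[Lemma~1.3]{Serrano95} for the implication ``$L^n>0\Rightarrow L$ ample'' is not legitimate here: that lemma concerns divisors of the shape $K_X+tD$ with $D$ strictly nef, whereas in your setting only $\pm(K_X+\Delta)$ is strictly nef, not $\Delta$ itself. The absolute base point free theorem does not apply directly because $a(K_X+\Delta)-K_X$ need not be nef; what you actually need is the log base point free theorem for the dlt pair $(X,\Delta)$, which is precisely what \cite[Lemma~1.4]{campanachenpeternell08} packages and what the paper cites. Second, the disjoint-components issue you flag is not a mere bookkeeping subtlety: if, say, $n=5$, $k=2$ and $\Delta_1\cap\Delta_2=\emptyset$, restricting to $\Delta_1$ leaves a smooth fourfold with empty boundary, and you would be reduced to the open Campana--Peternell conjecture in dimension four. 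Connectedness of the dual complex does not rescue this. The paper's argument faces the same constraint (it needs $\dim\Gamma<4$ for $\Gamma\in MS(\Delta)$), and it handles it by reading Definition~\ref{simple} as forcing $\bigcap_j\Delta_j\neq\emptyset$; you should adopt the same reading explicitly rather than hope to argue around it.
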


As in \cite{Fukuda04}, given a divisor $\Delta$ one defines $Strata(\Delta)$ as the set
$$Strata(\Delta):=\left\lbrace \Gamma\mid \Gamma \text{ is an irreducible component of }\Delta_J=\bigcap_{j\in J} \Delta_j\neq \emptyset \right\rbrace,$$
and $MS(\Delta)$ as the set of $\Gamma\in Strata(\Delta)$ which are minimal in the following sense: if $\Gamma'\subset \Gamma$ is a stratum of $\Delta$ then $\Gamma'=\Gamma$.
As a direct consequence of Definition \ref{simple}, if $\Delta$ is a simple normal crossing divisor then $MS(\Delta)$ coincides with the irreducible components of $\bigcap_{j=1}^k \Delta_j$ of maximal codimension (see for instance \url{https://stacks.math.columbia.edu/tag/0CBN}).

\smallskip

\emph{Proof of Theorem \ref{fukgen}.}
To prove assertion $(i)$, notice that since $K_X+\Delta$ is strictly nef so is $(K_X+\Delta)_{\mid \Gamma}=K_{\Gamma}$ for all $\Gamma\in MS(\Delta)$.
The assumption $k>n-4$ implies $\dim\Gamma<4$, so Conjecture \ref{serrano} holds and $(K_X+\Delta)_{\mid \Gamma}$ is ample for all $\Gamma\in MS(\Delta)$.
Now the result follows by \cite[Theorem 1.5]{Fukuda04} with $L :=K_X+\Delta$ and 
\cite[Lemma 1.4]{campanachenpeternell08}
The proof of assertion $(ii)$ is completely analogous. 
\qed

\bibliographystyle{siam}
\bibliography{main}

\begin{thebibliography}{10}

\bibitem{Ambro99}
{\sc F.~Ambro}, {\em Ladders on {F}ano varieties}, J. Math. Sci. (New York), 94
  (1999), pp.~1126--1135.

\bibitem{AndreattaFontanari18}
{\sc M.~Andreatta and C.~Fontanari}, {\em Effective adjunction theory}, Ann.
  Univ. Ferrara Sez. VII Sci. Mat., 64 (2018), pp.~243--257.

\bibitem{Beauville10}
{\sc A.~Beauville}, {\em Complex algebraic surfaces}, vol.~68 of London
  Mathematical Society Lecture Note Series, Cambridge University Press,
  Cambridge, 1983.
\newblock Translated from the French by R. Barlow, N. I. Shepherd-Barron and M.
  Reid.

\bibitem{BeltramettiSommese95}
{\sc M.~C. Beltrametti and A.~J. Sommese}, {\em The adjunction theory of
  complex projective varieties}, vol.~16 of De Gruyter Expositions in
  Mathematics, Walter de Gruyter \& Co., Berlin, 1995.

\bibitem{BroustetHoring10}
{\sc A.~Broustet and A.~H\"{o}ring}, {\em Effective non-vanishing conjectures
  for projective threefolds}, Adv. Geom., 10 (2010), pp.~737--746.

\bibitem{campanachenpeternell08}
{\sc F.~Campana, J.~A. Chen, and T.~Peternell}, {\em Strictly nef divisors},
  Math. Ann., 342 (2008), pp.~565--585.

\bibitem{CampanaPeternell91}
{\sc F.~Campana and T.~Peternell}, {\em Projective manifolds whose tangent
  bundles are numerically effective}, Math. Ann., 289 (1991), pp.~169--187.

\bibitem{CastelnuovoEnriques01}
{\sc G.~Castelnuovo and F.~Enriques}, {\em Sopra alcune questioni fondamentali
  nella teoria delle superficie algebriche}, Ann. Mat. Pura Appl., 3 (1901),
  pp.~165--225.

\bibitem{Fukuda04}
{\sc S.~Fukuda}, {\em A note on the ampleness of numerically positive log
  canonical and anti-log canonical divisors}, Tokyo J. Math., 27 (2004),
  pp.~377--380.

\bibitem{Horing12}
{\sc A.~H\"{o}ring}, {\em On a conjecture of {B}eltrametti and {S}ommese}, J.
  Algebraic Geom., 21 (2012), pp.~721--751.

\bibitem{Kawamata00}
{\sc Y.~Kawamata}, {\em On effective non-vanishing and base-point-freeness},
  Asian J. Math., 4 (2000), pp.~173--181.

\bibitem{Kawamata13}
\leavevmode\vrule height 2pt depth -1.6pt width 23pt, {\em On the abundance
  theorem in the case of numerical {K}odaira dimension zero}, Amer. J. Math.,
  135 (2013), pp.~115--124.

\bibitem{Matsuki94}
{\sc S.~Keel, K.~Matsuki, and J.~McKernan}, {\em Corrections to: ``{L}og
  abundance theorem for threefolds''}, Duke Math. J., 122 (2004), pp.~625--630.

\bibitem{LazicPeternell17}
{\sc V.~Lazi\'{c} and T.~Peternell}, {\em Rationally connected varieties---on a
  conjecture of {M}umford}, Sci. China Math., 60 (2017), pp.~1019--1028.

\bibitem{LazicPeternell18}
\leavevmode\vrule height 2pt depth -1.6pt width 23pt, {\em Abundance for
  varieties with many differential forms}, \'{E}pijournal G\'{e}om.
  Alg\'{e}brique, 2 (2018).

\bibitem{Miyaoka87}
{\sc Y.~Miyaoka}, {\em The {C}hern classes and {K}odaira dimension of a minimal
  variety}, in Algebraic geometry, {S}endai, 1985, vol.~10 of Adv. Stud. Pure
  Math., North-Holland, Amsterdam, 1987, pp.~449--476.

\bibitem{MiyaokaPeternell97}
{\sc Y.~Miyaoka and T.~Peternell}, {\em Geometry of higher-dimensional
  algebraic varieties}, vol.~26 of DMV Seminar, Birkh\"{a}user Verlag, Basel,
  1997.

\bibitem{Mori87}
{\sc S.~Mori}, {\em Classification of higher-dimensional varieties}, in
  Algebraic geometry, {B}owdoin, 1985 ({B}runswick, {M}aine, 1985), vol.~46 of
  Proc. Sympos. Pure Math., Amer. Math. Soc., Providence, RI, 1987,
  pp.~269--331.

\bibitem{Nakayama04}
{\sc N.~Nakayama}, {\em Zariski-decomposition and abundance}, vol.~14 of MSJ
  Memoirs, Mathematical Society of Japan, Tokyo, 2004.

\bibitem{Serrano95}
{\sc F.~Serrano}, {\em Strictly nef divisors and {F}ano threefolds}, J. Reine
  Angew. Math., 464 (1995), pp.~187--206.

\bibitem{Totaro13}
{\sc B.~Totaro}, {\em Line bundles with partially vanishing cohomology}, J.
  Eur. Math. Soc. (JEMS), 15 (2013), pp.~731--754.

\end{thebibliography}

\vspace{1cm}

\noindent
Camilla Felisetti, Claudio Fontanari \\
Dipartimento di Matematica \\
Universit\`a degli Studi di Trento \\
Via Sommarive 14, 38123 Trento (Italy) \\
camilla.felisetti@unitn.it, claudio.fontanari@unitn.it

\end{document}